\documentclass[11pt]{article}

\usepackage{amsfonts}
\usepackage{amsmath,bbm}
\usepackage{color, diagrams}

\textheight 205 true mm \textwidth  150 true mm \oddsidemargin
2.5true mm \evensidemargin 2.5 true mm

%
%

\newcommand{\beq}{\begin{equation}}
\newcommand{\eeq}{\end{equation}}
\newcommand{\bea}{\begin{eqnarray}}
\newcommand{\eea}{\end{eqnarray}}
\newcommand{\beas}{\begin{eqnarray*}}
\newcommand{\eeas}{\end{eqnarray*}}

%
%
\newtheorem{theorem}{Theorem}[section]

\newtheorem{corollary}[theorem]{Corollary}

\newtheorem{remark}[theorem]{Remark}
\newtheorem{example}[theorem]{Example}
\newtheorem{examples}[theorem]{Examples}
\newtheorem{foo}[theorem]{Remarks}

%
%
\newenvironment{proof}{\addvspace{\medskipamount}\par\noindent{\it
Proof}.}
{\unskip\nobreak\hfill$\Box$\par\addvspace{\medskipamount}}


\newcommand{\edg}[1]{\left[#1\right]}     







\newcommand{\bS}{\mathbb S}

\newcommand{\R}{\mathbb R}

\newcommand{\fH}{\mathbf{H}^{2n+1}}
\newcommand{\bsn}{\mathbb{S}^{2n+1}}

\newcommand{\bE}{\mathbb E}
\newcommand{\C}{\mathbb C}

\title{Conformal transforms and Doob's $h$-processes on Heisenberg groups}
\author{Jing Wang\footnote{wangjing@illinois.edu} }

\date{Department of Mathematics, \\
University of Illinois at Urbana Champaign \\
 Urbana, IL, USA}




\begin{document}
\maketitle

\begin{abstract}
We study the stochastic processes that are images of Brownian motions on Heisenberg group $\fH$ under conformal maps. In particular, we obtain that Cayley transform maps Brownian paths in $\fH$ to a time changed Brownian motion on CR sphere $\bsn$ conditioned to be at its south pole at a random time. We also obtain that the inversion of Brownian motion on $\fH$ started from $x\not=0$, is up to time change, a Brownian bridge on $\fH$ conditioned to be at the origin. 
\end{abstract}

\tableofcontents
\clearpage

\section{Introduction}
The Brownian motions on sub-Riemannian model spaces has been widely studied in recent years. Due to strong symmetries of  the model spaces, explicit computations analysis can be conducted (see \cite{Ga}, \cite{BGG}, \cite{BB}, \cite{BW}, and \cite{BW1}). In this paper we focus on the relationships between Brownian motion on Heisenberg group and its images under certain conformal maps, namely Cayley transform and Kelvin transform.  

Let $\fH$ be a $2n+1$ dimensional Heisenberg group that lives in $\C^n\times\R$ with coordinates $(z,t)=(z_1,\dots, z_n, t)$ where $z_j=x_j+iy_j$. It has the group law
\[
(z,t)(z',t')=(z+z',t+t'+\mathbf{Im}z\bar{z'}).
\]
It is a flat model space of sub-Riemannian manifolds. There is a canonical sub-Laplacian on $\fH$:
\[
\bar{L}_{\fH}=\sum_{j=1}^n\left(\frac{\partial^2}{\partial x_j^2}+\frac{\partial^2}{\partial y_j^2} + 2y_j\frac{\partial^2}{\partial x_j\partial t}-2x_j\frac{\partial^2}{\partial y_j\partial t}+|z_j|^2\frac{\partial^2}{\partial t^2}\right)
\] 
The Brownian motion on $\fH$ issued from $x'\in \fH$ is the strong Markov process that is generated  by $\frac{1}{2}\bar{L}_{\fH}$. 

Cayley transform is known to be a bi-holomorphic map between the Siegel domain $\Omega^{n+1}$ and a unit ball in $\C^{n+1}$. The restriction of Cayley transform on its boundary therefore provides a conformal map between $\fH$ and the unit sphere $\bsn$ in $\C^{n+1}$. If we consider the image of a Brownian path on $\fH$ under Cayley transform, it then turns out to be a $\bsn$-valued process. In particular, it is a time changed version of a Brownian path on $\bsn$ conditioned to be at the south pole at a random time. Below we state our main result.
\begin{theorem}\label{thm-bsn}
The  Brownian motion on $\fH$ issued from $x'$ is mapped by Cayley transform $\mathcal{C}_1$ to a time-changed Brownian motion on $\bsn$ issued from $x=\mathcal{C}_1(x')$ and conditioned to be at the south pole $-e_n$ at time $T$, 
where $T$ is an independent random variable with distribution
\begin{equation}\label{dis-T}
\mathbb{P}^h_x\edg{T>t}=\frac{\int_t^{+\infty}e^{-n^2s}p_s(-e_n,x)ds}{\int_0^{+\infty}e^{-n^2t}p_t(-e_n,x)dt}.
\end{equation}
Here $p_t(x,y)$ denotes the subelliptic heat kernel on $\bsn$.
\end{theorem}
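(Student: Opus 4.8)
The plan is to reduce the claim to a single conformal intertwining identity between the two sub-Laplacians, together with the theory of Doob's $h$-transforms. First I would record how $\bar{L}_{\fH}$ and the sub-Laplacian $L_{\bsn}$ on the CR sphere transform under the Cayley map. Because $\mathcal{C}_1$ is a CR-conformal diffeomorphism and $\fH$ is the Webster-flat model, the CR-conformal covariance of the (conformal) sub-Laplacian yields, for smooth $f$ on $\bsn$, an identity of the schematic form
\begin{equation*}
\bar{L}_{\fH}\big((f\circ\mathcal{C}_1)\,\varphi\big)=\Phi\,\varphi\,\Big[\big(L_{\bsn}-2n^2\big)f\Big]\circ\mathcal{C}_1,
\end{equation*}
where $\varphi>0$ is the conformal factor of $\mathcal{C}_1$ on $\fH$, $\Phi>0$ is the associated conformal density, and the zeroth-order term, which a direct computation from the Webster scalar curvature of $\bsn$ shows to equal $2n^2$, is what produces the weight $e^{-n^2 t}$ after passing to the generator $\tfrac12\bar{L}_{\fH}$. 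Writing $h$ for the function on $\bsn$ with $h\circ\mathcal{C}_1=\varphi$, this is exactly the statement that, up to the density $\Phi$, the generator $\tfrac12\bar{L}_{\fH}$ corresponds under $\mathcal{C}_1$ to the Doob $h$-transform of the killed operator $\tfrac12 L_{\bsn}-n^2$.

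Next I would identify $h$ analytically. The function $h$ is forced to be harmonic for the shifted operator, $\big(\tfrac12 L_{\bsn}-n^2\big)h=0$ on $\bsn\setminus\{-e_n\}$, with the correct singularity at the pole; equivalently $\big(\tfrac12 L_{\bsn}-n^2\big)h=-\delta_{-e_n}$. Integrating the heat equation against $e^{-n^2 t}$ and using $\partial_t p_t=\tfrac12 L_{\bsn}p_t$, an integration by parts gives
\begin{equation*}
\Big(\tfrac12 L_{\bsn}-n^2\Big)\int_0^{+\infty}e^{-n^2 t}p_t(-e_n,\cdot)\,dt=-\delta_{-e_n},
\end{equation*}
so that, up to a multiplicative constant, $h(x)=\int_0^{+\infty}e^{-n^2 t}p_t(-e_n,x)\,dt$, the $n^2$-resolvent density of $\tfrac12 L_{\bsn}$ with pole at the south pole. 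Matching this with the explicit conformal factor $\varphi$ of $\mathcal{C}_1$ confirms that the conformal weight is precisely this resolvent, and the normalizing denominator in \eqref{dis-T} is just $h(x)$.

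With the intertwining in hand, the probabilistic conclusion follows from the theory of $h$-transforms. The density $\Phi$ produces a time change: writing $X_t$ for the Brownian motion on $\fH$ issued from $x'$, the image $Y:=\mathcal{C}_1(X)$, run on the clock determined by the additive functional associated with $\Phi$, is the diffusion on $\bsn$ with generator $\tfrac1h\big(\tfrac12 L_{\bsn}-n^2\big)(h\,\cdot)=\tfrac12 L_{\bsn}+\langle\nabla_{\bsn}\log h,\nabla_{\bsn}\,\cdot\,\rangle$, namely the Doob $h$-transform of Brownian motion on $\bsn$ killed at rate $n^2$. Since $h$ is the $n^2$-potential of $\delta_{-e_n}$, the standard correspondence between $\lambda$-potential $h$-transforms and conditioning at an independent random time identifies this process with Brownian motion on $\bsn$ conditioned to be at $-e_n$ at an independent time $T$. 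The law of $T$ is read off from the $h$-transformed semigroup: using $P^h_t 1(x)=\tfrac{1}{h(x)}e^{-n^2 t}P_t h(x)$ together with $P_t h(x)=e^{n^2 t}\int_t^{+\infty}e^{-n^2 s}p_s(-e_n,x)\,ds$, one obtains
\begin{equation*}
\mathbb{P}^h_x[T>t]=P^h_t 1(x)=\frac{\int_t^{+\infty}e^{-n^2 s}p_s(-e_n,x)\,ds}{\int_0^{+\infty}e^{-n^2 s}p_s(-e_n,x)\,ds},
\end{equation*}
which is \eqref{dis-T}.

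The main obstacle is the first step: deriving the precise conformal intertwining, including the explicit densities $\Phi$ and $\varphi$ coming from the Cayley transform and the exact value $2n^2$ of the curvature constant. This requires the concrete coordinate form of $\mathcal{C}_1$ together with the CR-conformal transformation law for the sub-Laplacian (the sub-Riemannian analogue of the Yamabe operator), and it is here that the geometry of $\bsn$, in particular its Webster scalar curvature, enters and fixes the constant $n^2$. Once this analytic identity is established, identifying $h$ with the resolvent density and reading off the law of $T$ through Doob's transform and the time change are routine.
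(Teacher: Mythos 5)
Your overall architecture coincides with the paper's: a conformal intertwining of $L_{\fH}$ with a killed sub-Laplacian on $\bsn$ under $\mathcal{C}_1$, read as a Doob $h$-transform up to a time change, with the law of $T$ extracted from the transformed semigroup --- indeed your final computation $\mathbb{P}^h_x[T>t]=P^h_t1(x)=e^{-n^2t}P_th(x)/h(x)$ is literally the paper's. The genuine differences of route are two: the paper derives the intertwining (its Theorem 2.1) not from CR Yamabe covariance and Webster curvature, but by comparing the explicit Green functions $G_{\bsn}$ of $-L_{\bsn}+n^2$ and $G_{\fH}$ of $-L_{\fH}$ together with the Jacobian $|J_{\mathcal{C}_1}|=H^{n+1}$; and where you invoke the ``standard correspondence'' between $\lambda$-potential $h$-transforms and conditioning at an independent random time, the paper proves this from scratch using an independent exponential time of parameter $n^2$. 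Both substitutions are legitimate. (Your constant $2n^2$ versus the paper's $n^2$ is a factor-of-two normalization the paper itself is not careful about; I do not count it against you.)

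There is, however, a concrete error in how you identify the Doob function. In your schematic identity the weight $\varphi$ sits \emph{inside} $\bar L_{\fH}$, so to match the true identity (the paper's Theorem 2.1, written with $h=1+2\cos r_S\cos\theta+\cos^2r_S$ and $H=h\circ\mathcal{C}_1$) you are forced to take $\varphi=H^{n/2}$ and $\Phi=H$; hence the function you define by $h\circ\mathcal{C}_1=\varphi$ is the paper's $h^{n/2}$, which \emph{vanishes} at $-e_n$. But the $n^2$-resolvent density with pole at $-e_n$, which you (correctly) use from your second paragraph onward, is the paper's $h^{-n/2}$, which \emph{blows up} at $-e_n$. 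These two functions are reciprocals, not equal, so the ``matching'' you assert would in fact fail. Your conformal-weight function is not even harmonic for the shifted operator: since $(L_{\bsn}-n^2)h^{-n/2}=0$, one computes $(L_{\bsn}-n^2)h^{n/2}=\tfrac{n^2}{2}h^{\frac{n}{2}-2}\Gamma_{\bsn}(h,h)-2n^2h^{n/2}$, which at the north pole (where $\Gamma_{\bsn}(h,h)=0$) equals $-2n^2h^{n/2}\neq0$; consequently the operator $\tfrac1h(\tfrac12L_{\bsn}-n^2)(h\,\cdot)$ built from \emph{your} $h$ retains a zeroth-order term and is not the generator of any conditioned diffusion. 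Rewriting the intertwining in pushforward form shows that the image generator is $(\Phi\circ\mathcal{C}_1^{-1})$ times the Doob transform of the killed operator by $1/h$, i.e.\ by the Green function $h^{-n/2}$ --- not by your $h$. The repair is short: the Doob function is the \emph{reciprocal} of the conformal weight (equivalently, the $(n/2)$-power of the conformal factor of $\mathcal{C}_1^{-1}$), and with that correction your harmonicity claim, the drift form $\tfrac12L_{\bsn}+\Gamma_{\bsn}(\log h,\cdot)$, the time change by $\Phi$, and the law of $T$ all go through exactly as in the paper.
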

This result extends the result by Carne in  \cite{Carne}, where he proved that the Stereographic projection from $\R^n$ to $S^n$ maps Brownian paths in $\R^n$ to the paths of conditioned Brownian motion on $S^n$.

Another object of our study is to probabilistically interpret  the relation between the Brownian motion on $\fH$ started from any $x'\not=0$ and its image under the inversion map, namely the Kelvin transform. This type of question was first posed by Schwartz (see \cite{Sch}), who asked how Brownian motion in $\R^n$ can be interpreted as a Brownian bridge conditioned to be at the ``ideal point at infinity". A probabilistic approach was provided by Yor in \cite{Yor}. In the present paper, we obtain the result in a setting of a flat sub-Riemannian manifold. The inversion of Brownian motion on $\fH$ issued from $x\not=0$ turns out to be  a Brownian bridge conditioned to be at the origin up to time change.
\begin{theorem}\label{thm-K}
The  Brownian motion on $\fH$ generated by $\frac{1}{2}L_{\fH}$ and issued from $x'\not=0$ is mapped by Kelvin transform to a time-changed $\fH$-valued Brownian motion conditioned to be at the origin at $t=\infty$.
\end{theorem}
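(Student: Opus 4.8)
The plan is to exhibit the inverted process as a Doob $h$-transform after a time change, following the strategy behind Theorem~\ref{thm-bsn} and the Euclidean arguments of Carne \cite{Carne} and Yor \cite{Yor}. First I would fix the explicit Kelvin transform $\mathcal{K}$ on $\fH$, namely the inversion attached to the Kor\'anyi norm $\rho(z,t)=(|z|^4+t^2)^{1/4}$, and record that it is an involution of $\fH\setminus\{0\}$ with an explicit conformal factor satisfying $\rho\circ\mathcal{K}=1/\rho$. The essential analytic ingredient is Folland's fundamental solution of the sub-Laplacian, $h(x)=c_n\,\rho(x)^{-2n}$, which is $L_{\fH}$-harmonic on $\fH\setminus\{0\}$ (so $L_{\fH}h=0$ there), smooth and positive, singular at the origin and vanishing at infinity; this $h=\rho^{-(Q-2)}$, with homogeneous dimension $Q=2n+2$, is exactly the function that will implement the conditioning.

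Next I would establish the conformal covariance of $L_{\fH}$ under $\mathcal{K}$. Defining the Kelvin transform of a function by $(Ku)(x)=h(x)\,u(\mathcal{K}(x))$, the covariance law should read
$$L_{\fH}(Ku)=\rho^{-(2n+4)}\,\big((L_{\fH}u)\circ\mathcal{K}\big),$$
the direct analogue of the Euclidean identity $\Delta\!\left(|x|^{-(n-2)}f(x/|x|^2)\right)=|x|^{-(n+2)}(\Delta f)(x/|x|^2)$, with every exponent fixed by $Q=2n+2$. Expanding $L_{\fH}(h\,v)=h\,L_{\fH}v+2\langle\nh h,\nh v\rangle+v\,L_{\fH}h$ and using $L_{\fH}h=0$ on $\fH\setminus\{0\}$ shows that pulling $L_{\fH}$ back through $\mathcal{K}$ produces, up to a positive factor $\phi=\phi(\rho)$ (an explicit power of $\rho$), the $h$-transform generator
$$\mathcal{L}^h g=\frac{1}{2h}L_{\fH}(h\,g)=\tfrac12 L_{\fH}g+\tfrac12\langle\nh\log h,\nh g\rangle.$$

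With the covariance in hand I would apply It\^o's formula to $Y_t=\mathcal{K}(X_t)$, where $X_t$ is the $\tfrac12 L_{\fH}$-Brownian motion issued from $x'\neq0$. Since $Q=2n+2>2$ the origin is polar, so $X_t$ never reaches $0$ and $Y_t$ is well defined for all $t$, while transience guarantees $Y_t\to 0$. The computation of the previous step identifies the generator of $Y$ with $\phi\cdot\mathcal{L}^h$. Introducing the additive functional $A_t=\int_0^t\phi(Y_s)\,ds$ and its inverse $\gamma_s$, the time-changed process $\widetilde{Y}_s=Y_{\gamma_s}$ then solves the martingale problem for $\mathcal{L}^h$; that is, $\widetilde Y$ is precisely the Doob $h$-process of $\tfrac12 L_{\fH}$-Brownian motion.

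Finally I would identify $\widetilde Y$ probabilistically. Since $h=c_n\rho^{-2n}$ is the Green function of $L_{\fH}$ with pole at the origin, Doob's theory identifies the $h$-process as the $\tfrac12 L_{\fH}$-Brownian motion conditioned to converge to that pole, i.e. the $\fH$-valued Brownian motion conditioned to sit at the origin as $t\to\infty$; combined with the time change this is the assertion of Theorem~\ref{thm-K}. I expect the main obstacle to be the second step: proving the exact conformal covariance of the non-elliptic operator $L_{\fH}$ under the Heisenberg inversion and matching the resulting first-order correction precisely to the drift $\nh\log h$. One cannot borrow the Riemannian conformal calculus here; the identity instead relies on the pseudo-Hermitian (CR Yamabe) structure of $\fH$, in particular the vanishing of the flat Webster scalar curvature, together with Folland's explicit fundamental solution, with the homogeneous dimension $Q=2n+2$ threaded through every exponent.
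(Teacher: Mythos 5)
Your proposal is correct and reaches the theorem by the same overall mechanism as the paper --- conformal covariance of the sub-Laplacian under the Kelvin transform, identification of the transformed operator, up to the positive factor $N$, with the Doob transform by the Green function $N^{-\frac{n}{2}}=\rho^{-(Q-2)}$, a time change by the corresponding additive functional, and Doob's interpretation of the $h$-process as Brownian motion conditioned to be at the origin at $t=\infty$ --- but you establish the key covariance lemma by a genuinely different route. The paper never computes directly how $L_{\fH}$ transforms under inversion: it defines $\mathcal{K}=\mathcal{C}_2^{-1}\circ\mathcal{C}_1$ and obtains $(\mathcal{K}_*L_{\fH})F=N^{\frac{n}{2}+1}L_{\fH}(N^{-\frac{n}{2}}F)$ by combining the identity \eqref{LH-LS} for $\mathcal{C}_1$ with its analogue for $\mathcal{C}_2$ (with $h$ replaced by $\tilde{h}$), and each of those identities was itself derived not by differentiation but by comparing Green functions of the conformal sub-Laplacians on $\bsn$ and $\fH$. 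You instead work intrinsically on $\fH$: your identity $L_{\fH}(Ku)=\rho^{-(2n+4)}\left((L_{\fH}u)\circ\mathcal{K}\right)$, with $Ku=\rho^{-(Q-2)}\,(u\circ\mathcal{K})$ and $Q=2n+2$, is exactly equivalent to the paper's covariance theorem (set $F=u\circ\mathcal{K}$ and use that $\mathcal{K}$ is an involution), and you ground it in Folland's fundamental solution and Kor\'anyi's Kelvin-transform identity \cite{Kr}, never passing through the sphere. Your route is more self-contained and parallels the Euclidean arguments of \cite{Carne} and \cite{Yor} most directly, and it makes explicit the polarity of the origin ($Q>2$) and the transience needed for $\mathcal{K}(X_t)$ and its limit to be well defined --- points the paper leaves implicit; the paper's route buys economy within its own framework, since the covariance falls out of work already done for Theorem \ref{thm-bsn}, at the price of routing a statement purely about $\fH$ through $\bsn$. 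Your formulation of the time change, $A_t=\int_0^t\phi(Y_s)\,ds$ as a functional of the image process with $\gamma$ its inverse, is the one that matches the generator identity and is consistent with the paper's martingale-problem argument.

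Two small corrections, neither of which affects the structure of your argument. First, with $\Gamma_{\fH}(F,G)=\frac{1}{2}\left(L_{\fH}(FG)-FL_{\fH}G-GL_{\fH}F\right)=\langle\nh F,\nh G\rangle$ and $L_{\fH}h=0$ away from the pole, one gets
\[
\frac{1}{2h}L_{\fH}(hg)=\frac{1}{2}L_{\fH}g+\langle\nh \log h,\nh g\rangle ,
\]
so the drift in your $\mathcal{L}^h$ should carry coefficient $1$, not $\tfrac12$. Second, in the paper's normalization of the group law the Kor\'anyi gauge is $\rho(z,t)=(|z|^4+4t^2)^{1/4}$, matching $N=r_H^4+4t^2$, rather than $(|z|^4+t^2)^{1/4}$; this is only a convention mismatch in constants.
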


The approaches to both results follow the idea of Carne. By analyzing the radial part of the corresponding conformal sub-Laplacians on $\bsn$ and on $\fH$, we are able to obtain the relationship between Markov processes that are generated by $\frac{1}{2}L_{\bsn}$ and $\frac{1}{2}L_{\fH}$ respectively through an argument of Doob's $h$-processes. 

In the next section, we deduce Theorem \ref{thm-bsn} after a detailed discussion of Cayley transform and radial process or Brownian motions on $\bsn$ and $\fH$. In section 3 we focus on the inverse transform on $\fH$ and the proof of  Theorem \ref{thm-K}.
\section{Cayley transformation and Doob's $h$-process}
\subsection{Cayley transform on CR model spaces}
Cayley transforms on CR model spaces are natural analogues of stereographic projections on Riemannian models. Let $B^{n+1}=\{\zeta\in \C^{n+1}:|\zeta|<1\}$ be the unit ball in $\C^{n+1}$ and  $\Omega^{2n+1}=\{(z,w)\in \C^n\times \C, \mathbf{Im}(w)>|z|^2 \}$ the Siegel domain. The Cayley transform $\mathcal{C}: B^{2n+1}\to\Omega^{n+1}$ is a biholomorphic map such that (see \cite{CR})
\[
\mathcal{C}: \left(\zeta_1,\dots, \zeta_{n+1}\right)\to\left( \frac{\zeta_1}{1+\zeta_{n+1}},\dots,  \frac{\zeta_n}{1+\zeta_{n+1}}, i \frac{1-\zeta_{n+1}}{1+\zeta_{n+1}}\right), \quad \zeta^{n+1}\not=-1.
\]
Let $\bsn=\{\zeta\in \C^{n+1}, |\zeta|=1\}$ be the unit sphere in $\C^{n+1}$. It also appears as a model space of CR manifolds. The restriction of $\mathcal{C}$ to the CR sphere $\bsn$ minus a point gives a CR diffeomorphism to the boundary of the Siegel domain $\partial\Omega^{2n+1}$, which may be identified with the Heisenberg group $\fH$ through the CR isomorphism $\varphi:\fH\to \partial \Omega^{2n+1}$. For any $(z,t)\in \fH$,
\begin{equation}\label{eq-varphi}
\varphi(z,t)=(z,2t+i|z|^2).
\end{equation}
We denote the north pole of $\bS^{2n+1}$  by $e_n=\{0,\dots, 0,1\}$ and corresponding the south pole by $-e_n$. Now we consider the CR equivalence between Heisenberg group and CR sphere minus the south pole  $\mathcal{C}_1:\fH\to \bsn\setminus\{ -e_n\}$. It is then given by $\mathcal{C}_1= \mathcal{C}^{-1}\circ\varphi$. In local coordinates we have for any $\left( z,t\right)=(z_1,\dots, z_n, t)\in\fH$,
\begin{equation}\label{eq-C1-trans}
\mathcal{C}_1: \left( z,t\right)\to\left(\frac{2z_1}{(1+|z|^2)-2it},\dots,  \frac{2z_n}{(1+|z|^2)-2it}, \frac{1-|z|^2+2it}{1+|z|^2-2it}\right).
\end{equation}
It is a conformal map with inverse $\mathcal{C}_1^{-1}: \bsn\setminus\{ -e_n\}\to \fH$,
\begin{equation}\label{Cayley}
\mathcal{C}_1^{-1}: (\zeta_1,\cdots \zeta_{n+1})\to \left( \frac{\zeta_1}{1+\zeta_{n+1}},\dots,  \frac{\zeta_n}{1+\zeta_{n+1}}, \frac{i}{2}\frac{\overline{\zeta_{n+1}}-\zeta_{n+1}}{|1+\zeta_{n+1}|^2}\right).
\end{equation}
Since $\bsn$ is a model space of sub-Riemannian manifold with the Hopf fibration $\bS^1\to \bsn\to\mathbb{CP}^n$, it is more convenient for us to use the so-called cylindrical coordinates that carries the structural information and are given by
\begin{align*}
(w,\theta)\to \frac{e^{i\theta} }{\sqrt{1+|w|^2}} \left(w, 1\right),
\end{align*}
where  $\theta \in \R/2\pi\mathbb{Z}$, and $w=\zeta/\zeta_{n+1} \in \mathbb{CP}^n$. Here $w=(w_1, \cdots, w_n)$ parametrizes the complex lines passing through the origin, and $\theta$ determines a point on the line that is of unit distance from the north pole. Let $|w|=\tan r_S$, $r_S\in[0,\pi/2)$, then we have $\mathcal{C}_1^{-1}$ in cylindrical coordinates  given by
\begin{align*}
\mathcal{C}_1^{-1}: \left(\frac{e^{i\theta} }{\sqrt{1+|w|^2}} \left(w, 1\right)\right)\to\left( \frac{e^{i\theta}\cos r_S+\cos^2r_S}{1+\cos^2r_S+2\cos r_S\cos\theta} w ,\frac{\cos r_S\sin\theta}{1+\cos^2r_S+2\cos r_S\cos\theta}\right).
\end{align*}
Let $\psi_S: \bS^{2n+1}\to [0,\pi/2)\times \R/2\pi \mathbb{Z}$ be such that
\[
\psi_S\left(\frac{e^{i\theta} }{\sqrt{1+|w|^2}} \left(w, 1\right)\right)=(r_S,\theta)
\]
and $\psi_H: \fH\to \R_{\ge0}\times \R$ be such that
\[
\psi_H\left(z,t\right)=(r_H,t),
\]
where $r_H=\sqrt{\sum_{j=1}^n|z_j|^2}$. We define a map $\R_{\ge0}\times \R\to [0,\pi/2)\times \R/2\pi \mathbb{Z}$ by the chart below, and by abusing of notation we denote it by $\mathcal{C}_1$:
\begin{diagram}\label{diag}
\fH & \rTo^{\mathcal{C}_1} &\bsn  \\
 \dTo^{\psi_H} & &\dTo_{\psi_S} \\
\R_{\ge0}\times \R& \rTo^{\mathcal{C}_1}  & [0,\pi/2)\times \R/2\pi \mathbb{Z} 
\end{diagram}
We easily compute that
\[
\mathcal{C}_1:(r_H,t)\to\left(\arcsin\left(\frac{2r_H}{\sqrt{(1+r_H^2)^2+4t^2}}\right),\arcsin\left(\frac{4t}{\sqrt{(1+r_H^2)^2+4t^2}\sqrt{(1-r_H^2)^2+4t^2}}\right)\right)
\]
and  
\[
\mathcal{C}_1^{-1}:\left(r_S, \theta\right)\to\left(\frac{\sin r_S}{\sqrt{1+\cos^2r_S+2\cos r_S\cos\theta}},\frac{\cos r_S\sin\theta}{1+\cos^2r_S+2\cos r_S\cos\theta} \right).
\]

\subsection{Brownian motion and Doob's $h$-process}
Now we consider the Markov processes that are generated by sub-Laplacians $\bar{L}_{\fH}$ and $\bar{L}_{\bsn}$, which are referred to as Brownian motions on $\fH$ and $\bsn$ respectively throughout this paper.  Due to the radial symmetries of these diffusion processes, it is sufficient for us to consider only  the radial part of the sub-Laplacians.

We denote by $L_{\fH}$ the radial part of the sub-Laplacian on $\fH$ in coordinates ($r_H$, $t$), it is defined on the space $D_H=\{f\in C^\infty(\R_{\ge0}\times\R,\R), \frac{\partial f}{\partial r_H}|_{r_H = 0}=0\}$. Let $L_{\bsn}$ be the radial part of $\bar{L}_{\bsn}$ in cylindric coordinates ($r_S$, $\theta$), with domain $D_S=\{f\in C^\infty([0,\frac{\pi}{2})\times \R/2\pi\mathbb{Z}, \R), \frac{\partial f}{\partial r_S}|_{r_S=0}=0 \}$. Then for any $f\in D_H$ and $g\in D_S$, we have 
\[
\bar{L}_{\fH}(f\circ \psi_H)=(L_{\fH} f)\circ \psi_H, \quad \bar{L}_{\bsn}(g\circ \psi_S)=(L_{\bsn} g)\circ \psi_S.
\]
It is known that $L_{\bsn}$ is essentially self-adjoint with respect to the volume measure $d\mu_{\bsn}=\frac{2\pi^n}{\Gamma(n)}(\sin r_S)^{2n-1}\cos r_S dr_Sd\theta$ on $\bsn$, and $L_{\fH}$ is essentially self-adjoint with respect to the volume measure $d\mu_{\fH}=\frac{2\pi^n}{\Gamma(n)} r_H^{2n-1}dr_Hdt$ on $\fH$. Moreover, we have explicitly
\begin{equation}\label{LH}
L_{\fH}=\frac{\partial^2}{\partial r_H^2}+\frac{2n-1}{r_H}\frac{\partial}{\partial r_H}+r_H^2\frac{\partial^2 }{\partial t^2}
\end{equation}
and (see \cite{BW}, \cite{BB}, also \cite{G})
\begin{equation}\label{LS}
L_{\bsn}=\frac{\partial^2}{\partial r_S^2}+((2n-1)\cot r_S-\tan r_S)\frac{\partial}{\partial r_S}+\tan^2r_S\frac{\partial^2}{\partial \theta^2}.
\end{equation}
Let us consider Green function of the conformal sub-Laplacian $-L_{\bsn}+n^2$ with pole $(0,0)$ (the north pole of $\bsn$) 
and denote it by $G_{\bsn}$. From  \cite{BW} we have 
\begin{equation}\label{Green-S}
G_{\bsn}((0,0),(r_S, \theta))=\frac{\Gamma\left(\frac{n}{2}\right)^2}{8\pi^{n+1}(1-2\cos r_S\cos \theta+\cos^2r_S)^{n/2}}.
\end{equation}
On the other hand the Green function of $-L_{\fH}$ with respect to $d\mu_{\fH}$ is given by
\begin{equation}\label{Green-H}
G_{\fH}((0,0),(r_H,t))=\frac{\Gamma\left(\frac{n}{2}\right)^2}{8\pi^{n+1}(r_H^4+4t^2)^{n/2}}
\end{equation}
We consider $h\in D_S$, such that for any  $(r_S, \theta)\in [0,\frac{\pi}{2})\times \R/2\pi\mathbb{Z}$,
\begin{equation}\label{eq-h}
h(r_S, \theta)=1+2\cos r_S\cos\theta+\cos^2r_S,
\end{equation}
and $H\in D_H$, such that for any $(r_H,t)\in \R_{\ge0}\times\R$,
\begin{equation}\label{eq-H-har}
H(r_H, t)=\frac{4}{(1+r_H^2)^2+4t^2}.
\end{equation}
It is an easy fact that $h$ and $H$ are harmonic functions with poles $(0,\pi)$ and $(0,0)$ respectively. Moreover, we have
\[
H=\mathcal{C}_1^*h=h\circ \mathcal{C}_1.
\]
From \eqref{Green-S} and \eqref{Green-H} we can easily observe that
\[
G_{\bsn}((0,0),(r_S, \theta))(1+2\cos r_S\cos\theta+\cos^2r_S)^{\frac{n}{2}}=(\mathcal{C}_1^{-1*}G_{\fH})((0,0),(r_S, \theta)).
\]
In fact, for any $x,y \in [0,\frac{\pi}{2})\times \R/2\pi\mathbb{Z}$ we have 
\begin{equation}\label{Green-relation}
G_{\bsn}(x,y)=(\mathcal{C}_1^{-1*}G_{\fH})(x,y)h(x)^{-\frac{n}{2}}h(y)^{-\frac{n}{2}}.
\end{equation}
From this we can then deduce the relation between $L_{\fH}$ and $L_{\bsn}-n^2$.
\begin{theorem}
For any function $f\in D_S$, the relation of $L_{\fH}$ and $L_{\bsn}-n^2$ via Cayley transform is given by
\begin{equation}\label{LH-LS}
h^{(\frac{n}{2}+1)} (-L_{\bsn}+n^2) \left( h^{-\frac{n}{2}}f\right)=-(\mathcal{C}_{1*}L_{\fH})f
\end{equation}
where $h$ is as in \eqref{eq-h}.
\end{theorem}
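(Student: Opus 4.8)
The plan is to deduce the operator identity \eqref{LH-LS} from the Green function relation \eqref{Green-relation} by a duality argument, rather than by a brute-force second-order change of variables. Write $A=-L_{\bsn}+n^2$ and $B=-L_{\fH}$ for the two conformal sub-Laplacians; by the discussion preceding the statement each is essentially self-adjoint (with respect to $d\mu_{\bsn}$ and $d\mu_{\fH}$) and admits a genuine inverse realized as the integral operator against its Green kernel $G_{\bsn}$ and $G_{\fH}$ recalled in \eqref{Green-S} and \eqref{Green-H}. Noting that the left-hand operator is $\tilde A f:=h^{n/2+1}A(h^{-n/2}f)$ and the right-hand one is $\mathcal{C}_{1*}B$, and that both sides of \eqref{LH-LS} are determined on the dense domain $D_S$ by their inverses, it suffices to prove the corresponding identity of inverse integral operators. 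I will compute the kernel of each side and reduce the whole statement to a single scalar identity.

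First I would invert the left-hand operator. Since $\tilde A^{-1}g=h^{n/2}A^{-1}(h^{-(n/2+1)}g)$, using $A^{-1}F(x)=\int G_{\bsn}(x,y)F(y)\,d\mu_{\bsn}(y)$ and substituting \eqref{Green-relation} in the form $G_{\bsn}(x,y)=G_{\fH}(\mathcal{C}_1^{-1}x,\mathcal{C}_1^{-1}y)\,h(x)^{-n/2}h(y)^{-n/2}$, the two factors $h(x)^{\pm n/2}$ cancel and one is left with the kernel $G_{\fH}(\mathcal{C}_1^{-1}x,\mathcal{C}_1^{-1}y)$ integrated against $h(y)^{-(n+1)}\,d\mu_{\bsn}(y)$.

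Next I would invert the right-hand operator, which is simply $\mathcal{C}_{1*}(B^{-1})$; unwinding the pushforward and the substitution $q=\mathcal{C}_1^{-1}y$ in $B^{-1}F(p)=\int G_{\fH}(p,q)F(q)\,d\mu_{\fH}(q)$ produces the same kernel $G_{\fH}(\mathcal{C}_1^{-1}x,\mathcal{C}_1^{-1}y)$, now integrated against the pushforward measure $(\mathcal{C}_1)_*d\mu_{\fH}$. Comparing the two expressions, the theorem reduces to the single measure identity
\[
(\mathcal{C}_1)_*\,d\mu_{\fH}=h^{-(n+1)}\,d\mu_{\bsn},\qquad\text{equivalently}\qquad \mathcal{C}_1^*\,d\mu_{\bsn}=H^{\,n+1}\,d\mu_{\fH},
\]
with $H=h\circ\mathcal{C}_1=\tfrac{4}{(1+r_H^2)^2+4t^2}$ as in \eqref{eq-H-har}.

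The remaining, and main computational, step is to verify this Jacobian identity in the radial coordinates. Using $d\mu_{\bsn}=\frac{2\pi^n}{\Gamma(n)}(\sin r_S)^{2n-1}\cos r_S\,dr_S\,d\theta$ and $d\mu_{\fH}=\frac{2\pi^n}{\Gamma(n)}r_H^{2n-1}\,dr_H\,dt$ together with the explicit coordinate map $\mathcal{C}_1:(r_H,t)\mapsto(r_S,\theta)$, I would first record the pleasant simplification $\sin r_S=r_H\sqrt{H}$, read off directly from $\sin r_S=2r_H/\sqrt{(1+r_H^2)^2+4t^2}$. This collapses the required identity to the compact form
\[
\cos r_S\,\left|\det\frac{\partial(r_S,\theta)}{\partial(r_H,t)}\right|=H^{3/2},
\]
a purely two-dimensional Jacobian computation that I expect to be the real labor, though elementary given the closed forms for $r_S$ and $\theta$. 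I anticipate the main obstacle to be the bookkeeping in this Jacobian, together with the functional-analytic point that equality of the bounded inverse integral operators on a dense range forces equality of the essentially self-adjoint operators $\tilde A$ and $\mathcal{C}_{1*}B$ on $D_S$; the algebraic heart of the argument, however, is the cancellation of the conformal weights $h(x)^{\pm n/2}$ that turns \eqref{Green-relation} into the measure identity above.
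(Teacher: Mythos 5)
Your proposal is correct and is essentially the paper's own argument: the paper likewise inverts both operators, writes the inverses as integral operators against $G_{\bsn}$ and $G_{\fH}$, changes variables via the Jacobian identity $|J_{\mathcal{C}_1}|=H^{n+1}$, and invokes \eqref{Green-relation} — the only cosmetic difference being that the paper solves for unknown conformal weights $\sigma_1,\sigma_2$ while you verify the stated ones directly. Your further reduction of the Jacobian claim to $\cos r_S\,\abs{\det \partial(r_S,\theta)/\partial(r_H,t)}=H^{3/2}$ via $\sin r_S=r_H\sqrt{H}$ is a sound simplification (and the identity does check out), left at the same level of detail as the paper's unproved assertion that $|J_{\mathcal{C}_1}|=H^{n+1}$.
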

\begin{proof}
For any $f\in D_S$,  let $F\in D_H$ be  such that  $F=(\mathcal{C}_1)^*f=f\circ\mathcal{C}_1$. We assume for some $\sigma_1, \sigma_2\in D_S$ it holds that for any $x\in [0,\frac{\pi}{2})\times \R/2\pi\mathbb{Z}$,
\[
 (-L_{\bsn}+n^2) \left( \sigma_1f \right)|_x
 =-\sigma_2\left(L_{\fH}\right)(\mathcal{C}_{1}^*f)|_{\mathcal{C}_1^{-1}(x)}.
\]
It then amounts to find $\sigma_1, \sigma_2$. Let $g=-L_{\fH}F$, then $F=(-L_{\fH})^{-1}g$. The above equation is equivalent to
\begin{equation}
\sigma_1\cdot\left((-L_{\fH})^{-1}g\right)\circ\mathcal{C}_1^{-1}=(-L_{\bsn}+n^2)^{-1}(\sigma_2(g\circ\mathcal{C}_1^{-1})).
\end{equation}
Therefore, for all $x\in[0,\frac{\pi}{2})\times \R/2\pi\mathbb{Z}$, we have
\begin{equation}\label{Green-R}
\int G_{\fH}(\mathcal{C}_1^{-1}(x),v)g(v)d\mu_{\fH}v=\sigma_1^{-1}(x)\int G_{\bsn}(x,y)\sigma_2(y)g(\mathcal{C}_1^{-1}(y))d\mu_{\bsn} y
\end{equation}
where $G_{\bsn}$ and $G_{\fH}$ are Green functions as in \eqref{Green-S} and \eqref{Green-H}.
Moreover by changing variable $y=\mathcal{C}_1(v)$, the right hand side of the above equation writes 
\begin{equation}\label{Green-Relation}
\sigma_1^{-1}(x)\int G_{\bsn}(x,\mathcal{C}_1(v))\sigma_2(\mathcal{C}_1(v))g(v)|J_{\mathcal{C}_1}(v)|d\mu_{\fH}v,
\end{equation}
where $|J_{\mathcal{C}_1}(v)|$ is the Jacobi determinant. We can easily compute that 
\[
|J_{\mathcal{C}_1}(v)|=H^{n+1}(v),
\]
where $H$ is given as in \eqref{eq-H-har}.
Therefore \eqref{Green-Relation} becomes
\[
\sigma_1^{-1}(x)\int G_{\bsn}(x,\mathcal{C}_1(v))\sigma_2(\mathcal{C}_1(v))g(v)H^{n+1}(v)d\mu_{\fH}v.
\]
By plugging in \eqref{Green-relation} and comparing to \eqref{Green-R}, we obtain for all $x, y\in\bsn $
\[
\begin{cases}
\sigma_1(x)
=h^{-\frac{n}{2}}(x)
\\
\sigma_2(y)
=h^{-(1+\frac{n}{2})}(y),
\end{cases}
\] 
hence the conclusion.
\end{proof}

\begin{corollary}
For any function $f\in D_S$, we have that
\begin{equation}\label{Doob}
(\mathcal{C}_{1*}L_{\fH})f=h \left(L_{\bsn} f+\frac{2\Gamma_{\bsn}(h^{-\frac{n}{2}},f)}{h^{-\frac{n}{2}}}\right)
\end{equation} 
where $\Gamma_{\bsn}(f,g)=\frac{1}{2}(L_{\bsn}(fg)-fL_{\bsn}g-gL_{\bsn}f)$ for any $f,g\in  D_S$.
\end{corollary}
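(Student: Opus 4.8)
The plan is to read the corollary off the operator identity \eqref{LH-LS} just established, treating it as a Doob-type ground-state transform of $L_{\bsn}$ by the weight $h^{-n/2}$. Solving \eqref{LH-LS} for the pushed-forward operator gives
\[
(\mathcal{C}_{1*}L_{\fH})f = -\,h^{\frac{n}{2}+1}\,(-L_{\bsn}+n^2)\!\left(h^{-\frac{n}{2}}f\right),
\]
so the whole task reduces to expanding the right-hand side and bookkeeping the powers of $h$.

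First I would expand $L_{\bsn}(h^{-n/2}f)$ by the product rule, which is precisely what the carré du champ encodes: from the definition $\Gamma_{\bsn}(u,v)=\frac12\big(L_{\bsn}(uv)-uL_{\bsn}v-vL_{\bsn}u\big)$ with $u=h^{-n/2}$ and $v=f\in D_S$,
\[
L_{\bsn}\!\left(h^{-\frac n2}f\right)=h^{-\frac n2}L_{\bsn}f+f\,L_{\bsn}\!\left(h^{-\frac n2}\right)+2\Gamma_{\bsn}\!\left(h^{-\frac n2},f\right).
\]
Substituting this into $(-L_{\bsn}+n^2)(h^{-n/2}f)=-L_{\bsn}(h^{-n/2}f)+n^2h^{-n/2}f$ collects the term $f\big(n^2h^{-n/2}-L_{\bsn}(h^{-n/2})\big)$, which is exactly the piece I need to vanish.

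The key input — and the only nontrivial step — is that $h^{-n/2}$ is annihilated by the conformal sub-Laplacian away from its pole, i.e. $L_{\bsn}(h^{-n/2})=n^2h^{-n/2}$ on $[0,\pi/2)\times\R/2\pi\mathbb{Z}$ off $(0,\pi)$. This is where the Green function \eqref{Green-S} enters: $G_{\bsn}((0,0),\cdot)$ is the fundamental solution of $-L_{\bsn}+n^2$ with pole at the north pole, hence harmonic for $-L_{\bsn}+n^2$ away from $(0,0)$; since its denominator $(1-2\cos r_S\cos\theta+\cos^2 r_S)^{n/2}$ turns into the denominator $h^{n/2}$ of $h^{-n/2}$ under the angular shift $\theta\mapsto\theta+\pi$, and $L_{\bsn}$ depends on $\theta$ only through $\partial^2/\partial\theta^2$ and so commutes with that shift, $h^{-n/2}$ is (up to the constant $\Gamma(n/2)^2/8\pi^{n+1}$) the same Green function with pole translated to $(0,\pi)$. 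Thus $(-L_{\bsn}+n^2)h^{-n/2}=0$ off $(0,\pi)$, which is the asserted harmonicity of the weight $h$ and the main obstacle to pin down carefully.

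With that cancellation in hand, $(-L_{\bsn}+n^2)(h^{-n/2}f)=-h^{-n/2}L_{\bsn}f-2\Gamma_{\bsn}(h^{-n/2},f)$, and multiplying by $-h^{\frac n2+1}$ while using $h^{\frac n2+1}h^{-\frac n2}=h$ and $h^{\frac n2+1}=h/h^{-\frac n2}$ gives
\[
(\mathcal{C}_{1*}L_{\fH})f=h\,L_{\bsn}f+h^{\frac n2+1}\cdot 2\Gamma_{\bsn}\!\left(h^{-\frac n2},f\right)=h\left(L_{\bsn}f+\frac{2\Gamma_{\bsn}(h^{-\frac n2},f)}{h^{-\frac n2}}\right),
\]
which is exactly \eqref{Doob}. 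Beyond the harmonicity of $h^{-n/2}$, the argument is just the Leibniz rule together with tracking the exponents $-\tfrac n2$, $\tfrac n2+1$, and $1$, so I anticipate no further difficulty.
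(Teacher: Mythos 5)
Your proposal is correct and follows essentially the same route as the paper: it combines the operator identity \eqref{LH-LS} with the key fact $(L_{\bsn}-n^2)(h^{-\frac{n}{2}})=0$ and then expands via the Leibniz rule encoded in $\Gamma_{\bsn}$, exactly as the paper does. The only difference is that you supply a justification for the harmonicity of $h^{-\frac{n}{2}}$ (identifying it, via the shift $\theta\mapsto\theta+\pi$ under which $L_{\bsn}$ is invariant, with the Green function \eqref{Green-S} with pole moved to $(0,\pi)$), a fact the paper merely asserts.
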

\begin{proof}
Notice that 
\[
(L_{\bsn}-n^2)(h^{-\frac{n}{2}})=0.
\]
hence
\[
h^{\frac{n}{2}}(L_{\bsn}-n^2)(h^{-\frac{n}{2}}f)=L_{\bsn}f+2h^{\frac{n}{2}}\Gamma_{\bsn}(h^{-\frac{n}{2}},f).
\]
\end{proof}
Now we are ready to prove the main result.\\

\textbf{Proof of Theorem \ref{thm-bsn}}
The proof follows two steps. 

Step $1$: 
Notice that $h^{-\frac{n}{2}}$ is the Green function of the conformal sub-Laplacian $L_{\bsn}-n^2$ with pole $(\pi/2,0)$ (the south pole $-e_n$ of $\bsn$). For any $f\in D_S$ we let
\begin{equation}\label{Lh}
L^hf:= L_{\bsn} f+\frac{2\Gamma_{\bsn}(h^{-\frac{n}{2}},f)}{h^{-\frac{n}{2}}}=\frac{L_{\bsn}(h^{-\frac{n}{2}}f)}{h^{-\frac{n}{2}}}-n^2f.
\end{equation}

Let $X_t^h$ and $X_t$ be Markov processes generated by $\frac{1}{2}L^h$ and $\frac{1}{2}L_{\bsn}$, issued from $x\in\bsn$. We first prove that $X_t^h$ is $X_t$ conditioned to be at the south pole $-e_n$ at time $T$, where $T$ is a random time with distribution \eqref{dis-T}. 
 
It is sufficient to prove that for any $f\in D_S$,
\begin{equation}\label{condition}
\bE_x\edg{f(X_t^h)
}=\bE_x\edg{f(X_t)\mathbbm{1}_{t<T}|X_T=-e_n}
\end{equation}
Let $P_t^h$ and $P_t$ be the heat semigroups generated by $L^h$ and $L_{\bsn}$ respectively, then by iterating \eqref{Lh} it is not hard to obtain for any $x\in\bsn$,
\[
P_t^h(f(x))=h(x)^{\frac{n}{2}}e^{-tn^2}P_t(h^{-\frac{n}{2}}(x)f(x)),
\]
that is
\[
\bE_x\edg{f(X_t^h)
}=\frac{1}{h^{-\frac{n}{2}}(x)}e^{-tn^2}\bE_x\edg{h^{-\frac{n}{2}}(X_t)f(X_t)
}
=\bE_x\edg{\frac{e^{-tn^2}h^{-\frac{n}{2}}(X_t)}{h^{-\frac{n}{2}}(x)}f(X_t)
}.
\]
Proving \eqref{condition} is then equivalent to proving
\begin{equation}\label{eq-equivalent}
 \bE_x\edg{f(X_t)\mathbbm{1}_{t<T}|X_{T}=-e_n}=\bE_x\edg{\frac{e^{-tn^2}h^{-\frac{n}{2}}(X_t)}{h^{-\frac{n}{2}}(x)}f(X_t)
 }.
\end{equation}
Note that
\[
 \bE_x\edg{f(X_t)\mathbbm{1}_{t<T}|X_{T}=-e_n}=\frac{\bE_x\edg{f(X_t)\mathbbm{1}_{t<T}\mathbbm{1}_{X_T=-e_n}}}{\bE_x\edg{X_T=-e_n}}.
\]
Assume $T$ is an exponential random variable with parameter $-n^2$ under the original probability measure, we have
\[
\bE_x\edg{f(X_t)\mathbbm{1}_{t<T}\mathbbm{1}_{X_T=-e_n}}=\bE_x\edg{e^{-tn^2}h^{-\frac{n}{2}}(X_t)f(X_t)
}
\] 
and 
\[
\bE_x\edg{X_T=-e_n}=\int_0^{+\infty}p_t(x,-e_n)e^{-n^2t}dt=h^{-\frac{n}{2}}(x).
\]
Thus \eqref{eq-equivalent} holds when $T$ is an exponential random variable under the original probability measure. Switching to the conditioned probability measure, $T$ then has the distribution
\[
\mathbb{P}^h_x\edg{T>t}=e^{-n^2t}\frac{\bE_x\edg{h^{-\frac{n}{2}}(X_t)}}{h^{-\frac{n}{2}}(x)}=\frac{\int_t^{+\infty}e^{-n^2s}p_s(-e_n,x)ds}{\int_0^{+\infty}e^{-n^2t}p_t(-e_n,x)dt}.
\]

Step $2$: 
Next we prove the time change.  Let $Y_t$ be the Markov process  generated by $\frac{1}{2}L_{\fH}$ and issued from $\mathcal{C}_1^{-1}(x)$, we claim that $Y_t$ is mapped by Cayley transform to a time-changed version of $X^h$, i.e.,
\begin{equation}\label{eq-X-h-A}
X^h_{\mathcal{A}_t}=\mathcal{C}_1(Y_t)
\end{equation}
where the time change is given by $\mathcal{A}_t=\int_0^tH(Y_s)^{-1}ds$.  To see this, we consider for any $F=f\circ \mathcal{C}_1\in D_H$, the associated martingale $M_t^F$ that is given by
\[
M_t^F=F(Y_t)-\frac{1}{2}\int_0^t L_{\fH}F(Y_s)ds.
\]
By plugging in \eqref{eq-X-h-A}, \eqref{Doob} and  \eqref{Lh} we have
\[
M_t^F=f(X^h_{\mathcal{A}_t})-\frac{1}{2}\int_0^t  (L_{\fH}F)\circ\mathcal{C}_1^{-1}(X^h_{\mathcal{A}_s})ds=f(X^h_{\mathcal{A}_t})-\frac{1}{2}\int_0^{t} H(Y_s)L^hf(X^h_{\mathcal{A}_s})ds.
\]
Let $\sigma_t$ be the hitting time such that $\sigma_t=\inf\{u, \mathcal{A}_u>t \}$, then clearly  $\mathcal{A}_{\sigma_t}=t=\sigma_{\mathcal{A}_t}$. By changing variable  $s=\sigma_u$ we obtain
\[
M_t^F=f(X^h_{\mathcal{A}_t})-\frac{1}{2}\int_0^{\sigma_{\mathcal{A}_t}} H(Y_s)L^hf(X^h_{\mathcal{A}_s})ds=f(X^h_{t})-\frac{1}{2}\int_0^{\mathcal{A}_t}  H(Y_{\sigma_u})L^hf(X^h_u)\sigma_u'du.
\]
Note for any $u>0$ we have $u=\mathcal{A}_{\sigma_u}=\int_0^{\sigma_u}H(Y_s)ds$. This implies that 
\[
1=H(Y_{\sigma_u})\sigma_u'.
\]
Therefore 
\[
M_t^F=f(X^h_{t})-\frac{1}{2}\int_0^{\mathcal{A}_t} L^hf(X^h_u)du,
\]
and it completes the proof.

\section{Inversion of Brownian motions on Heisenberg group}
In this section we consider the inversion of Brownian motion on Heisenberg group. First we construct the inverse map by composing two Cayley transforms $\mathcal{C}_1$ and $\mathcal{C}_2$, between $\fH$ and $\bsn$ minus a point ($-e_n$ and $e_n$ respectively). We have already discussed  $\mathcal{C}_1$ in the previous section. Now let us consider $ \mathcal{C}_2:\fH\to \bsn \setminus\{ e_n\}$ where $e_{n}$ is the north pole on $\bsn$. We have
\[
\mathcal{C}_2: \left( z,t\right)\to\left(\frac{2z_1}{1+|z|^2+2it},\dots,  \frac{2z_n}{1+|z|^2+2it}, -\frac{1-|z|^2-2it}{1+|z|^2+2it}\right).
\]
and
\[
\mathcal{C}_2^{-1}: \lbrace \zeta_1,\cdots \zeta_{n+1}\rbrace\to \bigg\{ \frac{\zeta_1}{1-\zeta_{n+1}},\dots, \frac{i}{2}\frac{\overline{\zeta_{n+1}}-\zeta_{n+1}}{|1-\zeta_{n+1}|^2}\bigg\}. 
\]
Let $\mathcal{K}:\fH\setminus\{0\}\longrightarrow\fH\setminus\{0 \}$ be such that $\mathcal{K}=\mathcal{C}_2^{-1}\circ \mathcal{C}_1$, then
\[
\mathcal{K}:(z_1,\cdots z_n,t)\rightarrow\left(\frac{z_1}{|z|^2-2it},\dots,\frac{z_n}{|z|^2-2it},\frac{t}{|z|^4+4t^2} \right).
\]
Clearly $\mathcal{K}$ is an involution on $\fH\setminus \{0\}$ and preserve the Kor\'anyi ball $\{(z,t)\in\fH,|z|^4+4t^2=1\}$. Indeed it is the Kelvin transform generalized to Heisenberg group (see \cite{Kr}).
 
 
For any $(r_H, t)\in \R_{\ge0}\times \R$ and $(r_S,\theta)\in [0,\frac{\pi}{2})\times \R/2\pi\mathbb{Z}$, we let $\tilde{h}(r_S, \theta)=1+\cos^2r_S-2\cos r_S\cos\theta$ and $\tilde{H}(r_H,t)=\frac{4(r_H^4+4t^2)}{(1+r_H^2)+4t^2}$, then $\mathcal{K}^*H=(\mathcal{C}_2\circ\mathcal{C}_1^{-1})^*H=\tilde{H}$. Moreover, simple calculations show that 
\[
\tilde{h}=(\mathcal{C}_2^{-1})^*H, \quad h=(\mathcal{C}_2^{-1})^{*}\tilde{H}, \quad \tilde{h}=(\mathcal{C}_1^{-1})^*\tilde{H}.
\]
Let $N(r_H,t)=r_H^4+4t^2$. By comparing the conformal Laplacians induced by $\mathcal{C}_1$ and $\mathcal{C}_2$, we obtain the following relation.
\begin{theorem}
For any function $F\in D_H$,
\[
(\mathcal{K}_*L_{\fH}) F={N}^{\frac{n}{2}+1}L_{\fH}(N^{-n/2}F) .
\]
\end{theorem}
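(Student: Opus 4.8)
The plan is to factor the Kelvin transform as $\mathcal{K}=\mathcal{C}_2^{-1}\circ\mathcal{C}_1$ and to transport $L_{\fH}$ across $\bsn$ using the conformal relation \eqref{LH-LS}, applied to \emph{both} Cayley maps. Since pushforward of operators is functorial under composition, $(\Phi\circ\Psi)_*=\Phi_*\circ\Psi_*$, I have $\mathcal{K}_*L_{\fH}=(\mathcal{C}_2^{-1})_*\bigl(\mathcal{C}_{1*}L_{\fH}\bigr)$. The inner operator $\mathcal{C}_{1*}L_{\fH}$ already lives on $\bsn$, and \eqref{LH-LS} rewrites it as a conjugated conformal sub-Laplacian, $\mathcal{C}_{1*}L_{\fH}=h^{\frac n2+1}(L_{\bsn}-n^2)h^{-\frac n2}$, where I use the shorthand of writing a function for the associated multiplication operator, so that this means $f\mapsto h^{\frac n2+1}(L_{\bsn}-n^2)(h^{-\frac n2}f)$. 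Everything then reduces to applying $(\mathcal{C}_2^{-1})_*$ to this expression.

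First I would record the $\mathcal{C}_2$-analogue of \eqref{LH-LS}. Because $\mathcal{C}_2$ differs from $\mathcal{C}_1$ only in that it deletes the north pole $e_n$ rather than the south pole $-e_n$, the Green-function computation behind \eqref{LH-LS} goes through with $h$ replaced by the harmonic weight $\tilde h$ whose singularity sits at the deleted point; this gives $\mathcal{C}_{2*}L_{\fH}=\tilde h^{\frac n2+1}(L_{\bsn}-n^2)\tilde h^{-\frac n2}$. Solving this operator identity for the conformal Laplacian yields $L_{\bsn}-n^2=\tilde h^{-(\frac n2+1)}(\mathcal{C}_{2*}L_{\fH})\tilde h^{\frac n2}$, and substituting into the expression for $\mathcal{C}_{1*}L_{\fH}$ eliminates $L_{\bsn}$ entirely: $\mathcal{C}_{1*}L_{\fH}=(h/\tilde h)^{\frac n2+1}(\mathcal{C}_{2*}L_{\fH})(\tilde h/h)^{\frac n2}$.

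Now I would push forward by $\mathcal{C}_2^{-1}$. Since $(\mathcal{C}_2^{-1})_*$ turns multiplication by $\phi$ into multiplication by $\phi\circ\mathcal{C}_2$ and satisfies $(\mathcal{C}_2^{-1})_*(\mathcal{C}_{2*}L_{\fH})=L_{\fH}$, I obtain $\mathcal{K}_*L_{\fH}=\bigl((h/\tilde h)\circ\mathcal{C}_2\bigr)^{\frac n2+1}L_{\fH}\bigl((\tilde h/h)\circ\mathcal{C}_2\bigr)^{\frac n2}$. The stated pullback relations $h=(\mathcal{C}_2^{-1})^*\tilde H$ and $\tilde h=(\mathcal{C}_2^{-1})^*H$ give $h\circ\mathcal{C}_2=\tilde H$ and $\tilde h\circ\mathcal{C}_2=H$, and the identity $\tilde H=H\cdot N$ (read off from \eqref{eq-H-har} and the definition of $\tilde H$, since $\tilde H/H=r_H^4+4t^2=N$) then produces $(h/\tilde h)\circ\mathcal{C}_2=N$ and $(\tilde h/h)\circ\mathcal{C}_2=N^{-1}$, hence $\mathcal{K}_*L_{\fH}=N^{\frac n2+1}L_{\fH}N^{-\frac n2}$, which is the claim. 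The main obstacle is bookkeeping rather than analysis: one must justify the $\mathcal{C}_2$-analogue of \eqref{LH-LS} with care (the pole of the harmonic weight must be placed exactly at the point deleted by $\mathcal{C}_2$), and keep the conjugation by multiplication operators straight across the two pushforwards so that the exponents $\tfrac n2+1$ and $-\tfrac n2$ land correctly.
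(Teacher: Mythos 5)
Your proposal is correct and follows essentially the same route as the paper: both establish the $\mathcal{C}_2$-analogue of \eqref{LH-LS} with the weight $\tilde h$, combine the two conjugation identities to eliminate $L_{\bsn}-n^2$, and then push forward by $\mathcal{C}_2^{-1}$, using $h\circ\mathcal{C}_2=\tilde H$, $\tilde h\circ\mathcal{C}_2=H$ and $\tilde H/H=N$ to identify the multiplier as $N$. Your write-up is in fact slightly more explicit than the paper's about the functoriality $(\mathcal{C}_2^{-1}\circ\mathcal{C}_1)_*=(\mathcal{C}_2^{-1})_*\circ\mathcal{C}_{1*}$ and about how multiplication operators transform under pushforward, but the mathematical content is identical.
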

\begin{proof}
First we notice that for all $f\in D_S$,
\[
\tilde{h}^{\frac{n}{2}+1} (-L_{\bsn}+n^2) \left( \tilde{h}^{-\frac{n}{2}}f\right)=-(\mathcal{C}_{2*}L_{\fH})f.
\]
Together with \eqref{LH-LS} we obtain
\[
{h}^{-(\frac{n}{2}+1)}(\mathcal{C}_{1*}L_{\fH})( h^{\frac{n}{2}}f)=\tilde{h}^{-(\frac{n}{2}+1)}(\mathcal{C}_{2*}L_{\fH})( \tilde{h}^{\frac{n}{2}}f).
\]
Thus 
\[
(\mathcal{C}_{1*}L_{\fH})f=n^{-(\frac{n}{2}+1)}(\mathcal{C}_{2*}L_{\fH})\left(n^{\frac{n}{2}}f\right),
\]
where $n=\frac{\tilde{h}}{h}$. Note that $(\mathcal{C}_2)^*n=N^{-1}$, we have for any $F=\mathcal{C}_2^*f$,
\[
(\mathcal{K}_*L_{\fH})F=N^{\frac{n}{2}+1}L_{\fH}(N^{-\frac{n}{2}}F).
\]
\end{proof}

 Now we are ready to prove the relation between the inversion of Brownian motion on $\fH$ and the time changed Brownian bridge on $\fH$.\\
 
\textbf{Proof of Theorem \ref{thm-K}}
Note that $N^{-\frac{n}{2}}$ is the Green function of the sub-Laplacian $L_{\fH}$ with pole  $(0,0)$. We let
\begin{equation}\label{LN}
L^NF:=L_{\fH}F+2N^{\frac{n}{2}}\Gamma_{\fH}(N^{-\frac{n}{2}},F),
\end{equation}
where $\Gamma_{\fH}(F,G)=\frac{1}{2}(L_{\fH}(FG)-fL_{\fH}G-GL_{\fH}F)$ for any $F, G\in D_H$. From the previous theorem we have
\[
\mathcal{K}_*L_{\fH}=NL^N.
\]

Let $X_t^N$ and $X_t$ be Markov processes generated by $\frac{1}{2}L^N$ and $\frac{1}{2}L_{\fH}$. We first prove that $X_t^N$ is $X_t$ conditioned to be at the origin. 
 
It suffices to prove that for any $F\in D_H$,
\begin{equation}\label{condition-N}
\bE_x\edg{F(X_t^N)
}=\bE_x\edg{F(X_t)\mathbbm{1}_{t<T}|X_\infty=(0,0)}
\end{equation}
Let $P_t^N$ and $P_t$ be the heat semigroups generated by $L^N$ and $L_{\fH}$ respectively, then by iterating \eqref{LN} it is not hard to obtain
\[
P_t^N(F(x))=N(x)^{-\frac{n}{2}}P_t(N(x)^{-\frac{n}{2}}F(x)),
\]
that is
\[
\bE_x\edg{F(X_t^N)
}=\frac{1}{N(x)^{-\frac{n}{2}}}\bE_x\edg{N(X_t)^{-\frac{n}{2}}F(X_t)
}
=\bE_x\edg{\frac{N(X_t)^{-\frac{n}{2}}}{N(x)^{-\frac{n}{2}}}F(X_t)
}.
\]
From \eqref{condition-N}, we just need to show that
\[
 \bE_x\edg{F(X_t)|X_{\infty}=0}=\bE_x\edg{\frac{N(X_t)^{-\frac{n}{2}}}{N^{-\frac{n}{2}}(x)}F(X_t)}.
\]
This is an easy consequence of $\bE_x\edg{X_\infty=0}=N^{-\frac{n}{2}}(x)$ and 
\[
\bE_x\edg{F(X_t)\mathbbm{1}_{X_{\infty}=0}}=\bE_x\edg{F(X_t)\bE_x[\mathbbm{1}_{X_{\infty}=0}|\mathcal{F}_t]}=\bE_x\edg{{N(X_t)^{-\frac{n}{2}}}F(X_t)}.
\]
Next we prove the time change.  Consider the Markov process  generated by $\frac{1}{2}\mathcal{K}_*(L_{\fH})$. It is the image of $X_t$ under Kelvin transform, namely $\mathcal{K}(X_t)$. We claim 
\begin{equation}\label{eq-X-N-A}
\mathcal{K}(X_t)=X^N_{\mathcal{A}_t}
\end{equation}
where $\mathcal{A}_t=\int_0^tN(X_s)^{}ds$ is the time-change of $X^N$.  For any $F\in D_H$, we consider the associated martingale 
\[
M_t^F:=F(X_t)-\frac{1}{2}\int_0^t L_{\fH}F(X_s)ds.
\]
Denote $\tilde{F}=(\mathcal{K}^{})^*F$. By plugging in \eqref{eq-X-N-A}, we obtain
\[
M_t^F=\tilde{F}(X^N_{\mathcal{A}_t})-\frac{1}{2}\int_0^t  (L_{\fH}F)\circ\mathcal{K}^{-1}(X^h_{\mathcal{A}_s})ds=\tilde{F}(X^N_{\mathcal{A}_t})-\frac{1}{2}\int_0^{t} N(X_s)L^N\tilde{F}(X^N_{\mathcal{A}_s})ds.
\]
Let $\sigma_t$ be the hitting time such that $\sigma_t=\inf\{u, \mathcal{A}_u>t \}$, then clearly  $\mathcal{A}_{\sigma_t}=t=\sigma_{\mathcal{A}_t}$. By changing variable  $s=\sigma_u$ we have
\[
M_t^F=\tilde{F}(X^N_{\mathcal{A}_t})-\frac{1}{2}\int_0^{\sigma_{\mathcal{A}_t}} N(X_s)L^N\tilde{F}(X^N_{\mathcal{A}_s})ds=\tilde{F}(X^N_{t})-\frac{1}{2}\int_0^{\mathcal{A}_t}  N(X_{\sigma_u})L^N\tilde{F}(X^N_u)\sigma_u'du.
\]
Note for any $u>0$ we have $u=\mathcal{A}_{\sigma_u}=\int_0^{\sigma_u}N(X_s)ds$. By differentiating both sides with respect to $u$ we obtain
\[
1=N(X_{\sigma_u})\sigma_u'.
\]
Hence 
\[
M_t^F=\tilde{F}(X^N_{t})-\frac{1}{2}\int_0^{\mathcal{A}_t} L^N\tilde{F}(X^N_u)du,
\]
and we have the conclusion.

\end{document}